\newtheorem*{rep@theorem}{\rep@title}
\newcommand{\newreptheorem}[2]{%
	\newenvironment{rep#1}[1]{%
		\def\rep@title{#2 \ref{##1}}%
		\begin{rep@theorem}}%
		{\end{rep@theorem}}}
\newtheoremstyle{stylename}
{15pt} 
{15pt} 
{\itshape} 
{} 
{\bfseries} 
{.} 
{.5em} 
{} 
\theoremstyle{stylename}
\newtheorem{theorem}{Theorem}[section]
\newtheorem{lemma}[theorem]{Lemma}
\newtheoremstyle{exampstyle}
{15pt} 
{15pt} 
{} 
{} 
{\bfseries} 
{.} 
{.5em} 
{} 
\theoremstyle{exampstyle} 
\newtheorem{example}[theorem]{Example}
\titleformat{\chapter}[display]
  {\normalfont\sffamily\huge\bfseries\color{black}}
  {\chaptertitlename\ \thechapter}{20pt}{\Huge}
\bfseries\color{black}}
\begin{document}


\title {On automorphism groups of toroidal circle planes}
\author{Brendan Creutz, Duy Ho, G\"{u}nter F. Steinke}
 \maketitle
	
	\begin{abstract}
		Schenkel proved that the automorphism group of a flat Minkowski plane is a Lie group of dimension at most 6 and described planes whose automorphism group has dimension at least 4 or one of whose kernels has dimension 3. We extend these results to the case of toroidal circle planes. 
\end{abstract}

\section{Introduction}
 
The  classical Minkowski plane is the geometry of plane sections of the standard nondegenerate ruled quadric in real 3-dimensional projective space $\mathbb{P}_3(\mathbb{R})$. It is an example of a flat Minkowski plane, which is an incidence structure defined on the torus satisfying two axioms, namely the Axiom of Joining and the Axiom of Touching. Torodal circle planes are a generalization of flat Minkowski planes in the sense that  they are incidence structures on the torus that are only required to satisfy the Axiom of Joining, cf. Subsection 2.1. There are many examples of flat Minkowski planes, cf. \cite[Section 4.3]{gunter2001}. For an example of a proper toroidal circle plane (that is not a flat Minkowski plane), see \cite{polster1998b}.

The automorphism group of a flat Minkowski plane is well studied. Schenkel \cite{schenkel1980} showed that this group is a Lie group of dimension at most 6. She also determined flat Minkowski planes whose automorphism groups have dimension at least 4 or one of whose kernels (cf. Subsection 2.4) has dimension 3.  Such planes are isomorphic to  the classical Minkowski plane, a family constructed by Steinke \cite{gunter1985}, or a family generalised by Schenkel \cite{schenkel1980} from the construction by Hartmann \cite{hartman1981}. 

In this paper we extend these  structural results from flat Minkowski planes to toroidal circle planes. Our first main result is  
 
\begin{theorem} \label{tcplie} The automorphism group of a toroidal circle plane is a Lie group  with respect to the compact-open topology  of dimension at most 6. 
\end{theorem}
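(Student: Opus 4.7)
The plan is to adapt Schenkel's two-step scheme for flat Minkowski planes, weakening it so that only the Axiom of Joining is used. First I would install a Lie group structure on the automorphism group $\Gamma$ by a classical transformation-group argument, and second I would bound $\dim \Gamma \le 6$ by a point-stabilizer chain.

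For the Lie group structure, equip $\Gamma$ with the compact-open topology induced from its faithful action on the point torus $P$, so that $\Gamma\hookrightarrow\mathrm{Homeo}(P)$ as a topological subgroup and the evaluation $\Gamma\times P\to P$ is continuous. The key step is to show that $\Gamma$ is closed in $\mathrm{Homeo}(P)$ and locally compact. Here I would fix three pairwise non-parallel points $p,q,r$: by the Axiom of Joining they lie on a unique circle, and an Arzel\`a--Ascoli argument shows that any sequence $(\gamma_n)\subset\Gamma$ whose values $\gamma_n(p),\gamma_n(q),\gamma_n(r)$ stay in compact sets has a subsequence converging uniformly on compacta to a continuous map; continuity of the joining operation and closedness of the circle set then force the limit to be an automorphism. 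Once $\Gamma$ is a locally compact effective transformation group of the $2$-torus, the Bochner--Montgomery theorem supplies the Lie structure and smoothness of the action.

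For the dimension bound I iterate the orbit-stabilizer inequality. Since $P$ is a $2$-manifold, $\dim\Gamma\le 2+\dim\Gamma_{p}$ for any $p\in P$. Choosing $q$ not parallel to $p$, the $\Gamma_{p}$-orbit of $q$ lies in $P$ minus the two generators through $p$, so $\dim\Gamma_{p}\le 2+\dim\Gamma_{p,q}$. Choosing $r$ pairwise non-parallel to $p$ and $q$, a similar argument gives $\dim\Gamma_{p,q}\le 2+\dim\Gamma_{p,q,r}$. It then suffices to show that $\Gamma_{p,q,r}$ is discrete to conclude $\dim\Gamma\le 6$.

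The main obstacle is precisely this last rigidity step. In Schenkel's flat Minkowski proof, the Axiom of Touching supplies tangent configurations that directly pin down $\Gamma_{p,q,r}$, a tool which is unavailable here. My plan is to replace it with a Joining-only argument: vary a fourth point $s$ and follow the circle $C_{s}$ joining $q,r,s$; continuity of $s\mapsto C_{s}$ together with the constraint that any $\gamma\in\Gamma_{p,q,r}$ maps $C_{s}$ to $C_{\gamma s}$ should force the residual action at $p$ to be discrete. Verifying this local rigidity without Touching, and checking that the equicontinuity/closedness arguments of the first step survive in the Joining-only setting, is where the new technical work lies.
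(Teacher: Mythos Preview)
Your overall architecture matches the paper's: show the automorphism group is locally compact by controlling sequences via their values on a fixed triple of pairwise non-parallel points, then invoke a Montgomery--Zippin/Szenthe type theorem. But you have the difficulty exactly backwards.

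The step you flag as the ``main obstacle''---triviality or discreteness of the three-point stabilizer---is in fact already known and does \emph{not} require Touching. In the orientation- and $\mathcal{G}^\pm$-preserving subgroup $\Sigma$ (which has finite index in $\mathrm{Aut}(\mathbb{T})$), the identity is the only automorphism fixing three pairwise non-parallel points; this is Lemma~\ref{idmap}, quoted from \cite[Lemma~4.4.5]{gunter2001}. So your proposed fourth-point/circle-tracking argument is unnecessary, and the orbit--stabilizer chain collapses immediately once you know $\Sigma_{p,q,r}=\{\mathrm{id}\}$.

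Conversely, the step you dispatch in one sentence---``an Arzel\`a--Ascoli argument shows that any sequence whose values at $p,q,r$ stay in compact sets has a convergent subsequence''---is the genuine content of the theorem. Arzel\`a--Ascoli needs equicontinuity, and there is no obvious reason why controlling $\sigma_n$ at three points gives uniform control elsewhere when only Joining is available. The paper spends three lemmas on exactly this. Working in the derived $\mathbb{R}^2$-plane at $d_1$, it shows that $d_2,d_3$ generate a \emph{dense} set $\mathcal{D}$ using only joining, intersection, and parallel projection (Lemma~\ref{denseset}); then that the limit map $\overline\sigma=\lim_n\sigma_n$ is a well-defined homeomorphism on $\mathcal{D}$ (Lemma~\ref{homomeomorphismdense}, using the K4 coherence condition in place of Touching); and finally upgrades this to pointwise and then uniform convergence on all of $\mathcal{P}$ (Lemma~\ref{SigmacongN}). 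This yields a homeomorphism $\omega:\Sigma\to N\subset\widetilde{\mathcal{P}^3}$ onto a closed subset, so $\Sigma$ is locally compact and $\dim\Sigma\le\dim\widetilde{\mathcal{P}^3}=6$ in one stroke---no separate orbit--stabilizer chain is needed.

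In short: your plan would work, but only after you supply precisely the convergence argument the paper supplies, and the part you think is hard is already free.
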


 Schenkel \cite{schenkel1980} gives an outline of a proof of Theorem \ref{tcplie} in the case of flat Minkowski planes. Her method is based on convergence of automorphisms on so-called tripods and follows a proof of an analogous result for spherical circle planes by Strambach \cite{strambach1970d}. This approach neither makes use of parallel classes provided by toroidal circle planes, which causes problems in covering the entire point set, nor directly yields the right upper bound for the dimensions of the automorphism groups. An alternative proof in \cite{gunter1984} implicitly uses the Axiom of Touching and therefore cannot be carried over to toroidal circle planes. We provide a simplified proof of Theorem \ref{tcplie} that avoids these problems.

Based on the classification by Schenkel and Theorem \ref{tcplie}, we also show that a toroidal circle plane will automatically satisfy the Axiom of Touching when its automorphism group is large, in the following sense.

\begin{theorem} \label{dimension4} Let $\mathbb{T}$ be a toroidal circle plane with full automorphism group 
	$\text{\normalfont Aut}(\mathbb{T})$. If $\dim \text{\normalfont Aut}(\mathbb{T}) \ge 4$ or one of its kernels is 3-dimensional, then $\mathbb{T}$ is a flat Minkowski plane.   
\end{theorem}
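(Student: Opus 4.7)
The plan is to show that any toroidal circle plane $\mathbb{T}$ satisfying one of the two hypotheses automatically satisfies the Axiom of Touching; once this is established, $\mathbb{T}$ is a flat Minkowski plane (and by Schenkel \cite{schenkel1980} is then one of three known families, though only the Axiom of Touching is needed for the statement as given). This reduces the task to an orbit-theoretic analysis of the automorphism group, using Schenkel's treatment of flat Minkowski planes as a template.

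First I would use Theorem \ref{tcplie} to equip $G := \text{Aut}(\mathbb{T})$ with the structure of a Lie group of dimension at most $6$, and pass to the identity component $G^\circ$, which inherits the dimension hypothesis (the dimension of a kernel, when defined from a connected parameter family, likewise passes to the identity component). Since $G^\circ$ is connected, it cannot interchange the two topologically distinct families of parallel classes on the torus, so it preserves each family. Collapsing the parallels in each family yields two quotient actions of $G^\circ$ on $S^1$ and hence two continuous homomorphisms into $\text{Homeo}(S^1)$. The classification of closed connected subgroups of $\text{Homeo}_+(S^1)$ then puts strong restrictions on $G^\circ$ via these two quotients together with their common kernel.

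Next I would run a case analysis on $\dim G \in \{4,5,6\}$ together with the $3$-dimensional kernel case, following Schenkel's strategy. In each case the goal is to establish enough transitivity on configurations --- pointed circles, flags, or triples of mutually non-parallel points --- that the Axiom of Touching can be checked after moving a tangency configuration to a standard normal form. Because the parallel structure and the Axiom of Joining are shared with flat Minkowski planes, Schenkel's arguments should adapt with only minor modifications once the Lie group framework from Theorem \ref{tcplie} is in place.

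The main obstacle will be the boundary case $\dim G = 4$ (and the $3$-dimensional kernel case), where $G^\circ$ may fail to act transitively on triples in a strong enough sense, and one must squeeze the Touching axiom out of a rather weak transitivity assumption. The difficulty is that without the Touching axiom a priori, tangencies among circles could in principle branch or merge in unexpected ways, and ruling this out requires pinning down the local behaviour of the pencil of circles through a given point. I would approach this via the action of a point-stabiliser on that pencil --- a $1$-parameter family --- combined with the $S^1$-quotient actions, to show that tangency is well defined and unique in the sense required by the axiom. A convenient fall-back is that, in every subcase that survives the dimension hypothesis, one should be able to identify a concrete algebraic model (as in Schenkel's list) and then verify Touching in that model directly.
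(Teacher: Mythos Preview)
Your fall-back---identify $\mathbb{T}$ with a concrete model from Schenkel's list and read off Touching there---is in fact the paper's entire argument, not a last resort. The paper simply observes that the relevant classification proofs in \cite{gunter2001} (Theorems 4.4.10, 4.4.12, 4.4.15) nowhere invoke the Axiom of Touching, so they apply to toroidal circle planes unchanged and force $\mathbb{T}$ onto the known list of flat Minkowski planes. No tangency configuration is ever moved to normal form and no pencil of circles through a point is analysed; your primary line of attack via point-stabilisers acting on pencils is an unnecessary detour and would be awkward to carry out precisely because tangency is not yet well behaved.

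The one substantive step the paper supplies beyond citation is in the $\dim = 4$ subcase where neither quotient $\Gamma/\Delta^\pm$ is transitive on $\mathcal{G}^\pm$, so the connected component $\Gamma$ fixes a point $p$. Here the paper passes to the derived plane $\mathbb{T}_p$, which is an $\mathbb{R}^2$-plane from Joining alone, on which $\Gamma$ induces a $4$-dimensional point-transitive group. Salzmann's classification \cite[Theorem~4.12]{salzmann1967b} then forces $\mathbb{T}_p$ to be either Desarguesian or a half-plane, and the half-plane alternative is excluded by the dimension count $\dim \Delta^+\Delta^- = 6 > 4 = \dim\Gamma$. From the Desarguesian derived plane one concludes that $\mathbb{T}$ is a generalised Hartmann plane. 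This derived-$\mathbb{R}^2$-plane manoeuvre is the concrete tool your plan is missing for the hard case; the $S^1$-quotient analysis you describe sets up the case split but does not by itself resolve it.
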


The problem of classifying toroidal circle planes with 3-dimensional automorphism groups remains open. In   \cite[Section 5.3]{duythesis}, it was shown that only certain groups can occur and their possible actions are determined. Many families of flat Minkowski planes with 3-dimensional automorphism groups are known, cf. \cite[Section 4.3]{gunter2001}, \cite{gunter2004}, \cite{gunter2017}, \cite[Chapter 6]{duythesis}. There are currently no known examples of proper toroidal circle planes  with 3-dimensional automorphism groups. The only known example of a proper toroidal circle plane \cite{polster1998b} has a 2-dimensional automorphism group, cf. \cite[Section 7.4]{duythesis}.

In Section 2, we recall some facts about  toroidal circle planes. The proofs of the main theorems are presented in Section 3. 
 
\section{Preliminaries} 
\subsection{Toroidal circle planes, flat Minkowski planes and examples}
	A \textit{toroidal circle plane} is a geometry $\mathbb{T}=(\mathcal{P}, \mathcal{C}, \mathcal{G}^+, \mathcal{G}^-)$, whose 
	\begin{enumerate}[label=$ $]
		\item point set $\mathcal{P}$ is  the torus $\mathbb{S}^1 \times \mathbb{S}^1$, 
		\item circles (elements of $\mathcal{C}$) are graphs of homeomorphisms of $\mathbb{S}^1 $,  
		\item $(+)$-parallel classes (elements of $\mathcal{G}^+$) are the verticals $\{ x_0 \}  \times \mathbb{S}^1$,  
		\item $(-)$-parallel classes (elements of $\mathcal{G}^-$) are the horizontals $\mathbb{S}^1 \times \{ y_0 \}$,  
	\end{enumerate} 
	where $x_0, y_0 \in \mathbb{S}^1$. 
	
	 We denote the $(\pm)$-parallel class containing a point $p$ by $[p]_\pm$. When two points $p,q$ are on the same $(\pm)$-parallel class, we say they are \textit{$(\pm)$-parallel}  and denote  this by $p \parallel_{\pm} q$. Two points $p,q$ are $\textit{parallel}$ if they are  $(+)$-parallel or $(-)$-parallel, and we denote this by $p \parallel q$. 
	 
	Furthermore, a toroidal circle plane satisfies the following
	\begin{enumerate}[label=]
		\item \textit{Axiom of Joining}: three pairwise non-parallel points   can be joined by a unique circle. 
	\end{enumerate}
  
	A toroidal circle plane is called a \textit{flat Minkowski plane}  if it also satisfies the following
	\begin{enumerate}[label=$ $]
		\item \textit{Axiom of Touching}: for each circle $C$ and any two nonparallel points $p,q$ with $p \in C$ and $q \not \in C$, there is exactly one circle $D$ that contains both points $p,q$ and intersects $C$ only at the point $p$.
	\end{enumerate}

There are various known examples of flat Minkowski planes. For our purpose, we describe two particular families, which play a prominent role in the classification of flat Minkowski planes. We identify $\mathbb{S}^1$ as $\mathbb{R} \cup \{ \infty \}$ in the usual way.

 \begin{example}[{Swapping half plane $\mathcal{M}(f,g)$, cf. \cite[Subsection 4.3.1]{gunter2001}}] \label{ex:swappinghalf} Let $f$ and $g$ be two orientation-preserving homeomorphisms of $\mathbb{S}^1$. Denote $\text{\normalfont PGL}(2,\mathbb{R})$ by $\Xi$ and $\text{\normalfont PSL}(2,\mathbb{R})$ by $\Lambda$. The circle set $\mathcal{C}(f,g)$ of a \textit{swapping half plane $\mathcal{M}(f,g)$} consists of sets of the form 
 	$$
 	\{ (x,\gamma(x)) \mid x \in \mathbb{S}^1 \},
 	$$
 	where $\gamma \in \Lambda \cup g^{-1}(\Xi \backslash \Lambda) f$. 
 \end{example}
 
 \begin{example}[{Generalised Hartmann plane $\mathcal{M}_{GH}(r_1,s_1;r_2,s_2)$, cf. \cite[Subsection 4.3.4]{gunter2001}}] \label{ex:hartmann}
 	 For $r,s>0$, let $f_{r,s}$ be the orientation-preserving \textit{semi-multiplicative homeomorphism} of $\mathbb{S}^1$ defined by
 	 $$f_{r,s}(x) = 
 	 \begin{cases}
 	 x^r   & \text{for } x\ge 0,\\
 	 -s|x|^r & \text{for } x<0, \\
 	 \infty  & \text{for } x=\infty. \\
 	 \end{cases} 
 	 $$
 	
The circle set  $\mathcal{C}_{GH}(f,g)$  of a \textit{generalised Hartmann plane $\mathcal{M}_{GH}(r_1,s_1;r_2,s_2)$} consists of sets of the form 
 	$$
 	\{(x,sx+t)\mid x \in \mathbb{R} \} \cup \{ (\infty,\infty)\},
 	$$
 	where $s,t \in \mathbb{R}$, $s \ne 0$,   sets of the form 
 	$$
 	\left\{ \left(x,\dfrac{a}{f_{r_1,s_1}(x-b)}+c \right) \;\middle|\;  x \in \mathbb{R} \right\}  \cup \{ (b,\infty),(\infty,c)\},
 	$$
 	where $a,b,c \in \mathbb{R}$, $a > 0$, and  sets of the form
 		$$
 		\left\{ \left(x,\dfrac{a}{f_{r_2,s_2}(x-b)}+c \right) \;\middle|\;  x \in \mathbb{R}  \right\}  \cup \{ (b,\infty),(\infty,c)\},
 		$$
 	where $a,b,c \in \mathbb{R}$, $a < 0$. 

 \end{example}

One can obtain the classical Minkowski plane as the swapping half plane $\mathcal{M}(id,id)$, where $id$ denotes the identity map; or alternatively, the generalised Hartmann plane $\mathcal{M}_{GH}(1,1;1,1)$.

 \subsection{Geometric operations}

For a metric space $(X,d)$, let $\mathcal{H}(X)$ be the set of all of its closed subsets.  The set $\mathcal{H}(X)$ is a metric space if we equip it with the Hausdorff metric induced from the metric $d$, which is   defined by
$$
\mathbf{h}(C,D) = \max \{ \sup_{y\in D} \inf_{x \in C} d(x,y),  \sup_{x\in C} \inf_{y \in D} d(x,y) \},
$$
for two closed subsets $C$ and $D$ of $X$, cf. \cite[430, 431]{gunter2001}.

Considering the point set $\mathcal{P} \cong \mathbb{S}^1 \times \mathbb{S}^1$ as a subset of $\mathbb{R}^3$, we equip $\mathcal{P}$ with the  metric $\mathbf{e}$  induced by the Euclidean metric of $\mathbb{R}^3$. The circle set $\mathcal{C}$ is equipped with the Hausdorff metric $\mathbf{h}$ induced from $\mathbf{e}$.

Let $\widetilde{\mathcal{P}^3}$ be the subspace of the product space $\mathcal{P}^3$ consisting of all triples of pairwise nonparallel points. 
Let $\mathcal{P}^{1,2}$ be the subspace of $\mathcal{H}(\mathcal{P}^2)$ defined by   $\mathcal{P}^{1,2} \coloneqq \{ \{x,y\} \mid x,y \in \mathcal{P} \}$.
Let   $\mathcal{C}^{2*}$ be the subspace of the product space $\mathcal{C}^2$ which consists of all pairs of distinct circles that have non-empty intersection. 
 Let  $C^{1*} \subset C^{2*}$ be the subspace of pairs of touching circles.

We define five geometric operations on toroidal circle planes  as follows. 
 	\begin{enumerate}
 		
 		\item \textit{Joining}  $\alpha:  \widetilde{\mathcal{P}^3} \rightarrow \mathcal{C} $ is defined by $\alpha(x,y,z)$ being the unique circle going through three pairwise nonparallel points $x,y,z$. 
 		
 		\item \textit{Parallel Intersection} $\pi: \mathcal{P} \times \mathcal{P} \rightarrow \mathcal{P} $ is defined as $\pi: (x,y) \mapsto [x]_+ \cap [y]_- $.
 		
 		\item \textit{Parallel Projection} $\pi^{+(-)}: \mathcal{P} \times \mathcal{C} \rightarrow P$ is defined as $\pi^{+(-)}: (x,C) \mapsto [x]_{+(-)} \cap C$.
 		
 		\item \textit{Intersection} $\gamma: \mathcal{C}^{2*} \rightarrow \mathcal{P}^{1,2}$ is defined as $\gamma(C,D)=C \cap D$.
 		
 		\item \textit{Touching} $\beta: C^{1*} \rightarrow \mathcal{P}$ is defined as $\beta(C,D)=C\cap D$.

 	\end{enumerate} 
 
 The  geometric operations are continuous if and only  if $\mathcal{C}$ is equipped with  the topology  induced by the metric  $\mathbf{h}$.  For a proof, see \cite[Section 3.4]{duythesis}.
 
 Toroidal circle planes satisfy the K4 coherence condition for flat Minkowski planes introduced
 by Schenkel \cite[\text{2.1}]{schenkel1980}. 
\begin{lemma}[K4 coherence condition, special case] \label{lK4}
	
	Let $(C_n) \in \mathcal{C}$ be a sequence of circles and $(p_{i,n}) \rightarrow p_i, i =1,2,3$ be three converging sequences of points  such that $p_{i,n} \in C_n$, $p_1 \parallel_+ p_2$ and $p_3 \not \in [p_1]_+$. 
	
	Suppose  $(p_n) \rightarrow p \subseteq \mathcal{P}$ such that $p$ is not parallel to any $p_{i}$.  Then $(\pi^- (p_n, C_n)) \rightarrow \pi(p_1,p)$ and $(\pi^+ (p_n, C_n)) \rightarrow \pi(p,p_3)$.
\end{lemma}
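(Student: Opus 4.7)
My plan is to argue by contradiction, leveraging the compactness of $\mathcal{P}$ together with the continuity of the joining operation $\alpha$. Suppose the first claimed convergence fails: by compactness, extract a subsequence along which $q_n := \pi^-(p_n, C_n) \to q$ and $r_n := \pi^+(p_n, C_n) \to r$, with $q \neq q^* := \pi(p_1, p)$. The definitions immediately yield $y(q) = y(p)$ and $x(r) = x(p)$, so $q \neq q^*$ is the same as $x(q) \neq x(p_1)$, which places $q$ off the common vertical $[p_1]_+ = [p_2]_+$ and makes it nonparallel to both $p_1$ and $p_2$.

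I would next exploit a symmetry between $p_1$ and $p_2$. Since $[p_1]_+ = [p_2]_+$, we have $\pi(p_1,p) = \pi(p_2,p)$, so the conclusion is symmetric in $p_1 \leftrightarrow p_2$; and because $p_1 \parallel_+ p_2$ forces $y(p_1) \neq y(p_2)$, at most one of $p_1, p_2$ is $(-)$-parallel to $p_3$. After relabeling I may assume $p_1$ is nonparallel to $p_3$.

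The decisive maneuver is to locate three points $a_n, b_n, c_n \in C_n$ whose limits $(a,b,c)$ are pairwise nonparallel and satisfy $\{p_1, p_2\} \not\subseteq \{a, b, c\}$. Once such a triple is found, the Axiom of Joining gives $\alpha(a_n, b_n, c_n) = C_n$, and continuity of $\alpha$ forces $C_n \to C := \alpha(a, b, c)$ in $\mathcal{C}$, hence in the Hausdorff metric on $\mathcal{H}(\mathcal{P})$. Since $p_{1,n}$ and $p_{2,n}$ both lie on $C_n$ and converge to $p_1, p_2$, we obtain $p_1, p_2 \in C$, which contradicts the fact that a circle, being the graph of a homeomorphism of $\mathbb{S}^1$, cannot contain two $(+)$-parallel points.

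The hard part will be verifying that such a triple exists in every subcase. If $q$ is nonparallel to $p_3$, the triple $(q_n, p_{1,n}, p_{3,n})$ works immediately. The delicate subcase is $q \parallel_+ p_3$, which I would handle by invoking $r$: either $r = r^* := \pi(p, p_3)$, in which case $(p_{1,n}, q_n, r_n)$ succeeds because $(p_1, q, r^*)$ is readily checked to be pairwise nonparallel; or $r \neq r^*$, so $y(r) \neq y(p_3)$, and then, depending on whether $y(r)$ equals $y(p_1)$ and whether $p_2 \parallel_- p_3$, one of $(r_n, p_{1,n}, p_{3,n})$, $(r_n, p_{2,n}, p_{3,n})$, or $(p_{2,n}, q_n, r_n)$ provides the required nonparallel limit triple. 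The second conclusion, $\pi^+(p_n, C_n) \to \pi(p, p_3)$, then follows by the symmetric argument with the roles of $(+)$ and $(-)$ interchanged.
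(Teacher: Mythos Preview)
The paper does not give its own proof of this lemma; it simply cites Schenkel \cite[3.8]{schenkel1980} and \cite[Lemma~4.1.3]{duythesis}. So there is no in-paper argument to compare against, and I can only assess your proposal on its merits.

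Your argument for the first conclusion is sound. The contradiction strategy---locate three points on $C_n$ with pairwise nonparallel limits, use continuity of $\alpha$ to force $C_n\to C$, and then observe that $p_1,p_2\in C$ is impossible---is exactly the right mechanism, and your case split in the delicate subcase $q\parallel_+ p_3$ checks out. (The side condition $\{p_1,p_2\}\not\subseteq\{a,b,c\}$ is automatic once $(a,b,c)$ is pairwise nonparallel, so you may drop it.)

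The one imprecision is the final sentence. The hypotheses are \emph{not} symmetric under interchanging $(+)$ and $(-)$: you have two $(+)$-parallel limit points $p_1,p_2$ but no $(-)$-parallel pair, so there is no literal symmetry to invoke. Concretely, for the second conclusion assume $r\neq r^*$ along a subsequence; as in your Subcase~2b one finds that $(r,p_1,p_3)$ or $(r,p_2,p_3)$ works except in the residual case $y(r)=y(p_1)$ and $y(p_2)=y(p_3)$, where no pairwise nonparallel triple can be built from $\{p_1,p_2,p_3,r\}$ alone. The clean fix is to use the already-established first conclusion: then $q_n\to q^*=\pi(p_1,p)$, and the triple $(q^*,r,p_3)$ is pairwise nonparallel, yielding the contradiction. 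So the second conclusion follows from the first by the \emph{same style} of argument, but not by a formal $(+)/(-)$ swap. Adjust that sentence and the proof is complete.
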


For a proof, see \cite[\text{3.8}]{schenkel1980} or \cite[Lemma 4.1.3]{duythesis}.

\subsection{Derived planes} 
	The \textit{derived plane $\mathbb{T}_p$ of $\mathbb{T}$ at the point $p$} is the incidence geometry whose point set is $\mathcal{P} \backslash ( [p]_+ \cup [p]_-)$, whose lines are all parallel classes not going through $p$ and all circles of $\mathbb{T}$ going through $p$.   
For every point $p \in \mathcal{P}$, the derived plane $\mathbb{T}_p$ is an $\mathbb{R}^2$-plane  and even a flat  affine plane   when $\mathbb{T}$ is a flat Minkowski plane, cf. \cite[Theorem 4.2.1]{gunter2001}.

$\mathbb{R}^2$-planes were introduced by Salzmann in the 1950s and have been thoroughly studied, see \cite[Chapter 31]{salzmann1995}  and references therein. Following the notations from \cite[\text{31.4}]{salzmann1995}, we denote the line going through two points $a$ and $b$ by $ab$. The \textit{closed interval} $[a,b]$ is the intersection of all connected subsets of the line $ab$ that contain $a$ and $b; \textit{open intervals}$ are defined by $(a,b)=[a,b] \backslash \{a,b\}$.

One particular result on $\mathbb{R}^2$-planes we will use is the following.
\begin{lemma}[{cf. \cite[Proposition 31.12]{salzmann1995}}] \label{lcollinear}
	In $\mathbb{R}^2$-planes, collinearity and the order of (collinear) point triples are preserved under limits, in the following sense:
	
	\begin{enumerate}[label=(\alph*)]
		\item If the point sequences $(a_n),(b_n),(c_n)$ have mutually distinct limits $a,b,c$ and if $a_n,b_n,c_n$ are collinear for infinitely many $n \in \mathbb{N}$, then $a,b,c$ are collinear as well.
		\item If, in addition, $b_n \in (a_n,c_n)$ for infinitely many $i \in \mathbb{N}$, then $b \in (a,c)$. 
	\end{enumerate}
\end{lemma}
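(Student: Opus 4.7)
The plan is to leverage the topological structure of $\mathbb{R}^2$-planes, specifically the fact that the join operation $(x,y)\mapsto xy$, sending two distinct points to their joining line, is continuous when the line space is equipped with the Hausdorff topology on closed subsets of $\mathbb{R}^2$. This continuity, together with the fact that every line is homeomorphic to $\mathbb{R}$, supplies the two ingredients needed for parts (a) and (b).

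For part (a), after passing to a subsequence I may assume $a_n,b_n,c_n$ lie on a common line $L_n$ for every $n$. Since $a_n\to a$ and $b_n\to b$ with $a\ne b$, continuity of join gives $L_n=a_nb_n\to ab$ in the Hausdorff topology on the hyperspace of closed subsets of $\mathbb{R}^2$. A standard property of Hausdorff convergence in a locally compact Hausdorff space is that if $L_n\to L$ and $x_n\in L_n$ with $x_n\to x$, then $x\in L$. Applying this to $c_n\in L_n$ with $c_n\to c$ yields $c\in ab$, so $a,b,c$ are collinear.

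For part (b), I work on the line $L=ab$ from part (a). Each $L_n$ is homeomorphic to $\mathbb{R}$, and on such a line the relation $b_n\in (a_n,c_n)$ is characterised topologically: removing $b_n$ separates $a_n$ from $c_n$ in $L_n$. Suppose for contradiction that $b\notin (a,c)$; since the limits are distinct we then have $b\notin [a,c]$, so $a$ and $c$ lie in the same connected component of $L\setminus\{b\}$. Pick disjoint open neighbourhoods $U$ of $[a,c]$ in $L$ (not containing $b$) and $V$ of $b$. For $n$ large enough $a_n,c_n\in U$ and $b_n\in V$, so $b_n$ does not separate $a_n$ from $c_n$ in $L_n$, contradicting the hypothesis that this holds for infinitely many $n$.

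The main obstacle is justifying continuity of the join map from the axioms of an $\mathbb{R}^2$-plane; once this is available the rest reduces to standard limit arguments in hyperspace and line topology. This continuity is developed in Salzmann's treatment of stable planes, and the present lemma is its specialisation to the $\mathbb{R}^2$-plane setting, so in practice I would invoke \cite[Proposition 31.12]{salzmann1995} rather than reprove it from scratch.
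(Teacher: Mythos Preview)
The paper does not give its own proof of this lemma; it is quoted verbatim from \cite[Proposition 31.12]{salzmann1995} and used as a black box. Your final sentence --- invoke the reference rather than reprove it --- is therefore exactly what the paper does, and your sketch for part~(a) via continuity of join is the standard argument behind that reference.

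Your sketch for part~(b), however, has a genuine gap. You write ``pick disjoint open neighbourhoods $U$ of $[a,c]$ in $L$ \ldots\ and $V$ of $b$'' and then assert that ``for $n$ large enough $a_n,c_n\in U$''. But $a_n,c_n$ lie on $L_n$, not on $L$, so if $U\subset L$ this makes no sense. If instead you mean $U,V$ to be open in the ambient $\mathbb{R}^2$, then the step ``so $b_n$ does not separate $a_n$ from $c_n$ in $L_n$'' is unjustified: the intersection $L_n\cap U$ need not be connected, so $a_n$ and $c_n$ could sit in different components of $L_n\setminus\{b_n\}$ even though both lie in $U$ and $b_n\in V$. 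To repair this you need more than disjoint ambient neighbourhoods --- for instance, a coherent family of parametrisations $\phi_n:\mathbb{R}\to L_n$ converging locally uniformly to a parametrisation of $L$ (available from the local product structure of the flag space in $\mathbb{R}^2$-planes), after which betweenness of the pulled-back real parameters is preserved in the limit. Since this is precisely the machinery developed in \cite{salzmann1995}, citing the reference, as you ultimately propose and as the paper does, is the right call.
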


The geometric operations on the derived  $\mathbb{R}^2$-plane $\mathbb{T}_p$ at a point $p$ are induced from those on $\mathbb{T}$. On $\mathbb{T}_p$, we coordinatize the point set $\mathbb{R}^2$ in the usual way. It will be convenient to use the maximum metric  $\mathbf{d}$  defined as
$$
\mathbf{d}(p,q):=\max(|x_1-x_2|,|y_1-y_2|),
$$
for given two points $p:=(x_1,x_2), q:=(y_1,y_2)$ in $\mathbb{R}^2$. This metric is equivalent to restriction of $\mathbf{e}$ to $\mathbb{T}_p$.

\subsection{The automorphism group}

An \textit{isomorphism between two toroidal circle planes} is a bijection between the point sets that maps circles to circles, and induces a bijection between the circle sets. It can be shown that parallel classes are mapped to parallel classes.  

An \textit{automorphism of a   toroidal circle plane $\mathbb{T}$} is an isomorphism from $\mathbb{T}$ to itself.   With respect to composition, the set of all automorphisms of a toroidal circle plane is an abstract group. We denote this group by $\text{Aut}(\mathbb{T})$. Every automorphism of a toroidal circle plane is continuous and thus a homeomorphism of the torus, cf. \cite[Theorem 4.4.1]{gunter2001}.


 Let $C(\mathcal{P})$ be the space of continuous mappings from $\mathcal{P}$ to itself.
 As each automorphism is an element of $C(\mathcal{P})$, it is natural to equip the automorphism group $\text{Aut}(\mathbb{T})$ with the compact-open topology. Let
 $$
 (A,B)=\{ \sigma \in C(\mathcal{P}) \mid \sigma(A) \subset B\},
 $$
 where $A \subset \mathcal{P}$ is compact and $B \subset \mathcal{P}$ is open. The collection of all sets of the form $(A,B)$ is a subbasis for the compact-open topology of $C(\mathcal{P})$.
 
 Since $\mathcal{P}$ is a compact metric space, the compact-open topology is  equivalent to the topology of uniform convergence, cf.  \cite[283, 286]{munkres1974}.  
 Furthermore,  the compact-open topology is metrisable and a metric is given by
 $$
 \widetilde{\mathbf{e}}(\sigma,\tau)= \sup \{ \mathbf{e}( \sigma(x), \tau(x)) \mid  x \in \mathcal{P} \}.
 $$
 
With respect to the  topology induced from $C(\mathcal{P})$, the group $\text{\normalfont Aut}(\mathbb{T})$ becomes a topological group (cf. \cite[Theorem 4]{arens1946} or \cite[\text{96.6, 96.7}]{salzmann1995}), with a countable basis (cf. \cite[Theorem XII.5.2]{dugundji1966}).


The automorphism group $\text{\normalfont Aut}(\mathbb{T})$ has two distinguished normal subgroups, the \textit{kernels} $T^+$ and $T^-$ of the action of $\text{\normalfont Aut}(\mathbb{T})$ on the set of parallel classes $\mathcal{G}^+$ and $\mathcal{G}^-$, respectively. In other words, the kernel $T^\pm$ consists of all automorphisms of $\mathbb{T}$ that fix every $(\pm)$-parallel class. For convenience, we  refer to these two subgroups as the \textit{kernels $T^\pm$  of the plane $\mathbb{T}$}. 

%
%


For flat Minkowski planes, Schenkel  obtained the following results.
\begin{theorem} \label{minklie} Let $\mathbb{M}$ be a flat Minkowski plane. Then the automorphism group $\text{\normalfont Aut}(\mathbb{M})$ is a Lie group with respect to the compact-open topology of dimension at most 6. 
	
	\begin{enumerate}[label=(\alph*)]
	
\item	If $\text{\normalfont Aut}(\mathbb{M})$ has dimension at least 5, then  $\mathbb{M}$ is isomorphic to the classical flat Minkowski plane.
	
\item	If $\text{\normalfont Aut}(\mathbb{M})$ has dimension 4, then $\mathbb{M}$  is isomorphic to one of the following planes. 
	
	\begin{enumerate}[label=(\roman*)]
		\item A nonclassical swapping half plane $\mathcal{M}(f,id)$, where $f$ is a semi-multiplicative homeomorphism of the form $f_{d,s}$, $(d,s) \ne (1,1)$. This plane admits the $4$-dimensional group of automorphisms
		$$
		\{ (x,y) \mapsto (rx,\delta(y)) \mid r \in \mathbb{R}^+,\delta \in \text{\normalfont PSL}(2,\mathbb{R})\}. 
		$$
		
		\item A nonclassical generalised Hartmann plane $\mathcal{M}_{GH}(r_1,s_1;r_2,s_2)$, where $r_1,s_1,r_2,s_2 \in \mathbb{R}^+,$ $(r_1,s_1,r_2,s_2) \ne (1,1,1,1)$. This plane admits the 4-dimensional group of automorphisms 
		$$
		\{ (x,y) \mapsto (rx+a,sy+b) \mid a,b,r,s \in \mathbb{R}, r,s>0\}.  
		$$
	\end{enumerate}
		
\item	If one of the kernels of $\mathbb{M}$ is 3-dimensional, then $\mathbb{M}$ is isomorphic to a plane $\mathcal{M}(f,id)$, where $f$ is an orientation-preserving homeomorphism of $\mathbb{S}^1$. 
			
	\end{enumerate}

\end{theorem}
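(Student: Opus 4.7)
The plan is to prove the Lie group assertion and the dimension bound first, and then to classify by studying how the automorphism group acts on a derived plane and on the set of parallel classes.

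For the Lie group assertion, the action of $\text{\normalfont Aut}(\mathbb{M})$ on the torus $\mathcal{P}$ is continuous and effective. Equicontinuity of the induced action on triples of pairwise non-parallel points follows from continuity of the joining operation $\alpha$ together with the K4 coherence condition (Lemma \ref{lK4}), so Arzel\`a--Ascoli yields local compactness of $\text{\normalfont Aut}(\mathbb{M})$ in the compact-open topology. The Montgomery--Zippin solution to Hilbert's fifth problem then produces the Lie group structure. For the dimension bound, the key is that the stabiliser in $\text{\normalfont Aut}(\mathbb{M})$ of a triple in the $6$-manifold $\widetilde{\mathcal{P}^3}$ is trivial: the Axiom of Joining pins down one circle through the triple, the Axiom of Touching and continuity determine a canonical family of tangent circles at each of the three points, and rigidity of the derived flat affine plane then forces the automorphism to be the identity. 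Hence the orbit map is an immersion and $\dim \text{\normalfont Aut}(\mathbb{M}) \le 6$.

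For parts (a) and (b), the central tool is the induced action on a derived plane. Fix a point $p$ and let $\Sigma_p := \text{\normalfont Aut}(\mathbb{M})_p$ be its stabiliser. Since $\mathbb{M}_p$ is a flat affine plane, $\Sigma_p$ maps into the automorphism group of an $\mathbb{R}^2$-plane; by Salzmann's classification in \cite[Chapter 31]{salzmann1995}, the structure of this image severely restricts $\mathbb{M}_p$ once its dimension is large. Orbit--stabiliser gives $\dim \Sigma_p \ge \dim \text{\normalfont Aut}(\mathbb{M}) - 2$, so when $\dim \text{\normalfont Aut}(\mathbb{M}) \ge 5$ the derived plane at every point is the real affine plane, and a standard reconstruction then identifies $\mathbb{M}$ with the classical Minkowski plane. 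When $\dim \text{\normalfont Aut}(\mathbb{M}) = 4$, one runs through the list of closed $2$-dimensional subgroups of the automorphism group of an $\mathbb{R}^2$-plane, lifts each to an automorphism of $\mathbb{M}$ compatible with both parallelisms, and matches the resulting coordinate description to either $\mathcal{M}(f_{d,s},id)$ as in Example \ref{ex:swappinghalf} or $\mathcal{M}_{GH}(r_1,s_1;r_2,s_2)$ as in Example \ref{ex:hartmann}.

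For part (c), an element of $T^+$ fixes every $(+)$-parallel class setwise, so the action descends to a continuous homomorphism $\rho : T^+ \to \text{Homeo}_+(\mathbb{S}^1)$ on the quotient circle of $(-)$-parallel classes. Applying the Axiom of Joining to a single fixed circle shows that $\ker \rho$ acts trivially on that circle and hence is discrete, so a $3$-dimensional $T^+$ surjects onto a closed $3$-dimensional subgroup of $\text{Homeo}_+(\mathbb{S}^1)$, which by the classical analysis of transitive Lie group actions on $\mathbb{S}^1$ must be conjugate to $\text{\normalfont PSL}(2,\mathbb{R})$. Normalising coordinates so that this action is the standard M\"obius action, the $T^+$-orbit of a single circle through a chosen base point already fills in the $\text{\normalfont PSL}(2,\mathbb{R})$-component of the circle set; the Axiom of Joining then forces the remaining circles to complete it to $\mathcal{M}(f,id)$ for some orientation-preserving homeomorphism $f$ of $\mathbb{S}^1$. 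The main obstacle across the whole programme lies in these reconstruction steps, particularly in (b) and (c): passing from abstract group-theoretic data back to an explicit coordinate description of the circle set requires delicate normalisation of coordinates and careful case analysis to rule out spurious families that satisfy the Axiom of Joining but not the Axiom of Touching.
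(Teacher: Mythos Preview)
The paper does not supply its own proof of Theorem~\ref{minklie}; immediately after the statement it simply refers the reader to \cite[Chapter~5]{schenkel1980} and \cite[Theorems~4.4.10, 4.4.12, 4.4.15]{gunter2001}. So there is no in-paper argument to compare your proposal against in detail. Your outline is broadly in the spirit of those cited sources: one establishes local compactness of the automorphism group, invokes a Hilbert's fifth problem result, bounds the dimension via rigidity on triples, and then classifies using the derived-plane machinery of Salzmann together with the kernel action on the circle of parallel classes.

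A couple of points in your sketch deserve tightening. First, the stabiliser in $\text{Aut}(\mathbb{M})$ of a triple of pairwise nonparallel points is not literally trivial; it is trivial only in the finite-index subgroup $\Sigma$ of automorphisms that preserve each of $\mathcal{G}^+$, $\mathcal{G}^-$ and the orientation of parallel classes (this is Lemma~\ref{idmap}). The dimension bound survives, of course, but your wording overstates the rigidity. Second, your appeal to ``equicontinuity via $\alpha$ and the K4 coherence condition, then Arzel\`a--Ascoli'' is where the real work hides: what one actually needs is that convergence of $(\sigma_n)$ on a single triple forces uniform convergence on $\mathcal{P}$, and this is precisely the content the paper develops (for the more general toroidal setting) in Lemmas~\ref{denseset}--\ref{SigmacongN}. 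The paper also remarks that Schenkel's original tripod argument does not directly yield the sharp dimension bound, so ``equicontinuity follows'' is too quick as stated. Finally, the rigidity on triples (Lemma~\ref{idmap}) does not rely on the Axiom of Touching in the way you suggest; it is proved for toroidal circle planes using only joining and the parallel structure.
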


The proof of Theorem \ref{minklie} can be found in \cite[Chapter 5]{schenkel1980}, or \cite[Theorems 4.4.10, 4.4.12,  and 4.4.15]{gunter2001}.

 
%
%

\section{Proofs of Main Theorems}


Let $\Sigma$ be the subgroup of $\text{Aut}(\mathbb{T})$ consisting of all automorphisms that leave the sets $\mathcal{G}^\pm$ invariant and
preserve the orientation of parallel classes.   We first have the following. 
 
 \begin{lemma}\label{idmap}
 	In $\Sigma$, the identity map  is the only automorphism that fixes three pairwise nonparallel points. 
 \end{lemma}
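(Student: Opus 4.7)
The plan unfolds in three stages. First, since $\sigma \in \Sigma$ preserves each of $\mathcal{G}^{\pm}$ and fixes the points $p_i = (x_i, y_i)$, it fixes the six parallel classes $[p_i]_{\pm}$. Exploiting $\mathcal{P} = \mathbb{S}^1 \times \mathbb{S}^1$, this forces a fiber-product decomposition $\sigma(x, y) = (\sigma_+(x), \sigma_-(y))$, where $\sigma_+, \sigma_-$ are orientation-preserving homeomorphisms of $\mathbb{S}^1$ with $\sigma_+(x_i) = x_i$ and $\sigma_-(y_i) = y_i$. Consequently, all nine parallel intersections $(x_i, y_j)$ are automatically fixed.

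Second, I would exploit circles to couple $\sigma_+$ and $\sigma_-$. The joining circle $C_0 = \alpha(p_1, p_2, p_3)$ must be fixed setwise, and writing $C_0$ as the graph of a homeomorphism $\gamma_0$ translates $\sigma(C_0) = C_0$ into the coupling $\sigma_- = \gamma_0 \sigma_+ \gamma_0^{-1}$. Moreover, for each nonidentity $\tau \in S_3$, the grid triple $\{(x_i, y_{\tau(i)}) : i = 1, 2, 3\}$ is again pairwise nonparallel and fixed, so its joining circle is setwise fixed as well, yielding further commutation constraints on $\sigma_+$.

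The final stage---forcing $\sigma_+ = \mathrm{id}$---is where I expect the main obstacle. To handle it, I would pass to the derived plane $\mathbb{T}_{p_1}$, which is an $\mathbb{R}^2$-plane in which the induced collineation $\widetilde{\sigma}$ fixes the quadrangle $\{p_2, p_3, (x_2, y_3), (x_3, y_2)\}$ (no three collinear), preserves both the vertical and horizontal parallelisms, and preserves orientation. Using that any two distinct circles of $\mathbb{T}$ meet in at most two points (necessarily pairwise nonparallel), together with Lemma \ref{lcollinear} on the preservation of collinearity and order under limits, I would track iterated joinings and parallel projections of generic points against the fixed configuration, and invoke continuity of the geometric operations to conclude that the fixed-point set of $\widetilde{\sigma}$ is dense in $\mathbb{T}_{p_1}$, and hence all of it.

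The essential difficulty is precisely this last step: an orientation-preserving homeomorphism of $\mathbb{S}^1$ that fixes only three points need not be trivial, so the algebraic constraints from the first two stages cannot close the argument by themselves. The extra rigidity must come from combining the Axiom of Joining applied to many four-point configurations with the topological structure of the derived plane---this is exactly the technical content that Schenkel's argument for flat Minkowski planes extracted from the Axiom of Touching, and which has to be replaced here by an argument valid in the more general toroidal setting.
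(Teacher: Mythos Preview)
Your stage~3 is exactly the argument the paper invokes: the cited \cite[Lemma 4.4.5]{gunter2001} passes to the derived $\mathbb{R}^2$-plane $\mathbb{T}_{p_1}$, observes that the fixed-point set of the induced map contains everything obtainable from $p_2,p_3$ by joining, intersecting, parallel intersecting and parallel projecting, shows this set is dense (this is the same density statement that the present paper later records as Lemma~\ref{denseset}), and concludes by continuity. So your plan is on target.

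Two corrections. First, stage~2 is superfluous: the extra fixed circles coming from permuted grid triples and the resulting commutation constraints on $\sigma_+$ are already absorbed by the density argument, since all of those circles and their intersections lie in the set generated from $p_2,p_3$ by the geometric operations. Second, and more importantly, your final paragraph misreads the situation. The proof of \cite[Lemma 4.4.5]{gunter2001} does \emph{not} use the Axiom of Touching; it is valid for toroidal circle planes as stated, which is precisely why the paper can cite it without further comment. There is nothing to ``replace'': the density of the generated set follows from joining, intersection, and the parallel operations alone, all of which are available in any toroidal circle plane. The obstacle you anticipate therefore does not arise.
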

 
 For a proof, cf. \cite[Lemma 4.4.5]{gunter2001}. 
 
 To prove Theorem \ref{tcplie}, we will verify that $\Sigma$ is locally compact by showing $\Sigma$ is homeomorphic to a closed subset of the locally compact space $\widetilde{\mathcal{P}^3}$ of dimension 6. Then, by a theorem of Szenthe, $\Sigma$ is a Lie group. Since $\Sigma$ has finite index in $\text{Aut}(\mathbb{T})$, it follows that $\text{Aut}(\mathbb{T})$ is also a Lie group of dimension at most 6.

Let $\widetilde{d}=(d_1,d_2,d_3) \in \widetilde{\mathcal{P}^3}$ be a triple of pairwise nonparallel points. Let $d_4:= \pi(d_2,d_3), d_5:=\pi(d_3, d_2)$, where $\pi$ denotes the operation Parallel Intersection (cf. Subsection 2.2).   In $\Sigma$, let $(\sigma_n)$ be a sequence of automorphisms that converges on the points $d_i$, and let $e_i := \lim_n \sigma_n(d_i)$, which we may assume to be also pairwise nonparallel,  for $i=1,\dots,5$.

 \begin{lemma} \label{denseset} In the derived plane $\mathbb{T}_{d_1}$, by means of joining, intersecting, parallel intersecting and parallel projecting, the two points $d_2, d_3$ generate a dense subset $\mathcal{D}$.
 \begin{proof}
 	It is sufficient to show that the set $\mathcal{D}$ generated by these operations is dense in $[d_2,d_5]$. Suppose for a contradiction that there exists an open interval  $(a,b) \subset [d_2,d_5] \backslash \overline{\mathcal{D}}$. Without loss of generality, we may assume $a \in (d_2,b)$, as in Figure \ref{fig:lemmadense}. We aim to construct a point in $(a,b) \cap \mathcal{D}$ by means of geometric operations.
	\begin{figure}[h]
		\begin{center}
			\begin{tikzpicture}[scale=0.8]
			
			\draw [fill] (0,0) circle [radius=0.1];
			\node[below left] at (0,0) {$d_2$};
			\coordinate (d_2) at (0,0);
			
			\draw [fill] (0,7) circle [radius=0.1];
			\node[above left] at (0,7) {$d_4$};
			\coordinate (d_4) at (0,7);

			\draw [fill] (6,0) circle [radius=0.1];
			\node[below right] at (6,0) {$d_5$};
			\coordinate (d_5) at (6,0);

			\draw [fill] (6,7) circle [radius=0.1];
			\node[above right] at (6,7) {$d_3$};
			\coordinate (d_3) at (6,7);
			
			\draw [fill] (2,0) circle [radius=0.1];
			\coordinate (a) at (2,0);
			\node[label={[label distance=0.05cm]270:$a$}] at (a) {};
			
			\draw [fill] (1.5,0) circle [radius=0.1];
			\coordinate (a_n) at (1.5,0);
			\node[label={[label distance=0.05cm]270:$a_n$}] at (a_n) {};
			
			\draw [fill] (5,0) circle [radius=0.1];
			\node[below] at (5,-0.1) {$b_n$};
			\coordinate (b_n) at (5,-0.1);
			
			\draw [fill] (4.5,0) circle [radius=0.1];
			\node[below] at (4.5,-0.1) {$b$};
			\coordinate (b) at (4.5,0);
			
			\draw  (d_4) -- (d_3);
			\draw  (d_5) -- (d_3);
			\draw  (d_4) -- (d_2);
			\draw  (d_2) -- (d_5);

			\draw  (a_n) -- (d_3);
			\draw (b_n) -- (d_4);

			
			\path [name path=and3] (a_n) -- (d_3);
			\path [name path=bnd4] (b_n) -- (d_4);
			\path [name intersections={of=and3 and bnd4,by=f_n}];
			\node[right] at (f_n) {$f_n$};
			\draw [fill] (f_n) circle [radius=0.1];
			
			\path [name path=d2d5] (d_2) -- (d_5);
			
			\coordinate (dum) at (0,3);
			
			\path [name path=fncn]  (f_n) -- +($(d_2)-(dum)$);
			
			\path [name intersections={of=d2d5 and fncn,by=c_n}];

			\node[label={[label distance=0.05cm]270:$c_n$}] at (c_n) {};
			
			\draw [fill] (c_n) circle [radius=0.1];

			\draw [dashed]  (f_n) -- (c_n);

			\end{tikzpicture}
		\end{center}
		\caption{} \label{fig:lemmadense}
	\end{figure} 	
 	
 	Let $(a_n)$ and $(b_n)$ be sequences convergent to $a$ and $b$, respectively, such that $a_n \in\mathcal{D} \cap [d_2,a)$, and  $b_n \in \mathcal{D} \cap (b,d_5]$. Let $f_n:= a_nd_3 \cap d_4b_n$ and let $c_n :=\pi(f_n,d_2)$. Then $c_n \in (a_n,b_n)$. Since the geometric operations are continuous, 
	the limits $f:=\lim_n f_n$ and  $c:= \lim_n c_n$ exist and $c \in [a,b]$. 
 	
 	We prove   $c \ne a$. Suppose the contrary, so that  $f \parallel_+ a$. Consider the sequence $(C_n)$ of circles $C_n:= \alpha(d_3,f_n,a_n)$ and the constant sequence $(d_1)$ on $\mathbb{T}$. By Lemma \ref{lK4},   $(\pi^-(d_1,C_n)) \rightarrow \pi(a,d_1) \ne d_1$, which contradicts the continuity of joining. Similarly, $c \ne b$ and so $c \in (a,b)$.  Then, for all sufficiently large $n$, the point $c_n$   belongs to $(a,b)$, which contradicts the assumption $(a,b) \cap\mathcal{D}=\emptyset$. Therefore $\mathcal{D}$ is dense in $[d_2,d_5]$. \qedhere
 	
%
\end{proof}
 \end{lemma}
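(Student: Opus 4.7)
My plan is first to reduce to showing that $\mathcal{D}$ is dense on the segment $[d_2,d_5]$ of the horizontal through $d_2$ in the derived $\mathbb{R}^2$-plane $\mathbb{T}_{d_1}$. Once this is known, parallel projection through $d_3$ transfers density to the opposite horizontal, and then repeated applications of joining, intersection, and parallel intersection propagate density throughout $\mathbb{T}_{d_1}$, so the global density statement follows.

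For the reduced claim I argue by contradiction. Suppose $(a,b) \subseteq [d_2,d_5] \setminus \overline{\mathcal{D}}$ is a maximal open gap with $a$ closer to $d_2$ than $b$. Choose sequences $a_n \to a$ and $b_n \to b$ with $a_n \in \mathcal{D} \cap [d_2,a)$ and $b_n \in \mathcal{D} \cap (b,d_5]$; these exist because $a,b \in \overline{\mathcal{D}}$. Setting $d_4 = \pi(d_2,d_3) \in \mathcal{D}$ and using joining to form the lines $a_n d_3$ and $d_4 b_n$ of $\mathbb{T}_{d_1}$, intersection to define $f_n := (a_n d_3) \cap (d_4 b_n)$, and parallel intersection to define $c_n := \pi(f_n, d_2)$, I obtain a sequence $(c_n)$ in $\mathcal{D} \cap [d_2,d_5]$ with $c_n$ lying strictly between $a_n$ and $b_n$ (using the order behaviour of $\mathbb{R}^2$-planes). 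Continuity of the geometric operations gives $f_n \to f$ and $c_n \to c$, and Lemma~\ref{lcollinear} then places $c \in [a,b]$.

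The main obstacle is to rule out $c = a$ (the case $c = b$ is symmetric); once that is done, $c \in (a,b)$ forces $c_n \in (a,b) \cap \mathcal{D}$ for all large $n$, contradicting the choice of the gap. If $c = a$ then $f \parallel_+ a$. Observe that each circle $C_n := \alpha(d_3, f_n, a_n)$ of $\mathbb{T}$ passes through $d_1$, because in the derived plane $\mathbb{T}_{d_1}$ the line $a_n d_3$ comes from the $\mathbb{T}$-circle through the pairwise nonparallel triple $(d_1, d_3, a_n)$, which then also contains $f_n$. Thus $\pi^-(d_1, C_n) = d_1$ for every $n$. On the other hand, applying the K4 coherence condition (Lemma~\ref{lK4}) to $(C_n)$ with the three on-circle sequences $(f_n) \to f$, $(a_n) \to a$, $(d_3) \to d_3$ (where $f \parallel_+ a$ supplies the required parallel pair in the limit) and the constant sequence $(d_1)$ predicts $\pi^-(d_1, C_n) \to \pi(a, d_1) \neq d_1$. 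These two conclusions are incompatible, and the resulting contradiction completes the argument.
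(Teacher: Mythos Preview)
Your argument is correct and follows essentially the same route as the paper's proof: the same reduction to density in $[d_2,d_5]$, the same construction of $f_n=(a_nd_3)\cap(d_4b_n)$ and $c_n=\pi(f_n,d_2)$, and the same use of the K4 coherence condition applied to $C_n=\alpha(d_3,f_n,a_n)$ against the constant sequence $(d_1)$ to rule out $c=a$ (and symmetrically $c=b$). Your version is a bit more explicit in two places---spelling out why density on $[d_2,d_5]$ suffices, and noting that $d_1\in C_n$ forces $\pi^-(d_1,C_n)=d_1$---but these are elaborations of the paper's own reasoning rather than a different approach.
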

 
Lemma \ref{denseset} can also be applied for the triple $(e_1,e_2,e_3)$. In $\mathbb{T}_{e_1}$, let $\mathcal{E}$ be the dense set generated by the points $e_2,e_3$.

\begin{lemma} \label{homomeomorphismdense} $\mathcal{D}$ is homeomorphic to  $\mathcal{E}$.
\begin{proof} Each point $x \in \mathcal{D}$ can be obtained from the points $d_i$ by finitely many geometric operations. Since we assume $(\sigma_n)$ converges on $d_i$,  $\lim_n \sigma_n(x)$ exists. Hence we can define $\overline{\sigma}: \mathcal{D} \rightarrow \mathcal{E}: x \mapsto \lim_n \sigma_n(x)$. In the following, we show that $\overline{\sigma}$ is a homeomorphism. By construction, $\overline{\sigma}$ is surjective. 
\begin{enumerate}[label=\arabic*),leftmargin=0pt,itemindent=*]
\item In the derived plane $\mathbb{T}_{d_1}$, let $\mathcal{R}$ be the closed rectangle formed by the parallel classes of $d_2$ and $d_3$; in $\mathbb{T}_{e_1}$, let $\mathcal{S}$  be the closed rectangle formed by the parallel classes of $e_2$ and $e_3$, cf. Figure \ref{fig:preservecontain}.   We first prove that if a point $p \in \mathcal{D}$ is in the interior of $\mathcal{R}$, then $q:= \overline{\sigma}(p)$ is in the interior of $\mathcal{S}$. 

\begin{figure}[h]
	\centering
	\begin{subfigure}[h]{0.5\textwidth}
		\centering
		\begin{tikzpicture}
		\draw [fill] (0,0) circle [radius=0.1];
		\node[below left] at (0,0) {$d_2$};
		\coordinate (d_2) at (0,0);
		
		\draw [fill] (0,7) circle [radius=0.1];
		\node[above left] at (0,7) {$d_4$};
		\coordinate (d_4) at (0,7);

		\draw [fill] (6,0) circle [radius=0.1];
		\node[below right] at (6,0) {$d_5$};
		\coordinate (d_5) at (6,0);

		\draw [fill] (6,7) circle [radius=0.1];
		\node[above right] at (6,7) {$d_3$};
		\coordinate (d_3) at (6,7);
		
		\draw [fill] (2,5) circle [radius=0.1];
		\node[above right] at (2,5) {$p_0$};
		\coordinate (p) at (2,5);
		
		\draw [fill] (6,5) circle [radius=0.1];
		\node[above right] at (6,5) {$p'$};
		\coordinate (p') at (6,5);
		
		\draw  (d_4) -- (d_3);
		\draw  (d_5) -- (d_3);
		\draw  (d_4) -- (d_2);
		\draw  (d_2) -- (d_5);
		
		\draw  (p) -- (p');
		
		
		\draw [name path=d_2--p'] (d_2) -- (p');
		\path [name path=d_4--p] (d_4) -- (6,1);
		\path [name intersections={of=d_4--p and d_2--p',by=r_1}];
		\draw  (d_4) -- (r_1);
		\node[left] at (r_1) {$r_1$};
		\draw [fill] (r_1) circle [radius=0.1];
		
		\path [name path=p+] (p) -- (2,0);
		\path [name intersections={of=p+ and d_2--p',by=r_0}];
		\node[left] at (r_0) {$r_0$};
		\draw [fill] (r_0) circle [radius=0.1];
		\draw [dashed] (p) -- (r_0);
		
		\path [name path=r_1+] (r_1) -- (3.805,5);
		\path [name path=p-] (p) -- (p');
		\path [name intersections={of=r_1+ and p-,by=p_1}];
		\node[above] at (p_1) {$p_1$};
		\draw [fill] (p_1) circle [radius=0.1];
		\draw  [dashed] (r_1) -- (p_1);
		\end{tikzpicture}
		\caption*{$\mathcal{R}$ in $\mathbb{T}_{d_1}$}
	\end{subfigure}
	\hfill
	\begin{subfigure}[h]{0.45\textwidth}
		\centering
		\begin{tikzpicture}
		\draw [fill] (0,0) circle [radius=0.1];
		\node[below left] at (0,0) {$e_2$};
		\coordinate (e_2) at (0,0);
		
		\draw [fill] (0,7) circle [radius=0.1];
		\node[above left] at (0,7) {$e_4$};
		\coordinate (e_4) at (0,7);

		\draw [fill] (6,0) circle [radius=0.1];
		\node[below right] at (6,0) {$e_5$};
		\coordinate (e_5) at (6,0);

		\draw [fill] (6,7) circle [radius=0.1];
		\node[above right] at (6,7) {$e_3$};
		\coordinate (e_3) at (6,7);
		
		\draw [fill] (0,5) circle [radius=0.1];
		\node[above right] at (0,5) {$q$};
		\coordinate (q) at (0,5);
		
		\draw [fill] (6,5) circle [radius=0.1];
		\node[above right] at (6,5) {$q'$};
		\coordinate (q') at (6,5);
		
		\draw  (e_4) -- (e_3);
		\draw  (e_5) -- (e_3);
		\draw  (e_4) -- (e_2);
		\draw  (e_2) -- (e_5);
		
		\draw  (q) -- (q');
		
		\draw (q') -- (e_2);
		\end{tikzpicture}
		\caption*{$\mathcal{S}$ in $\mathbb{T}_{e_1}$}
	\end{subfigure}
	\caption{} \label{fig:preservecontain}
\end{figure}

Each automorphism $\sigma_n$ induces an isomorphism between the derived $\mathbb{R}^2$-planes $\mathbb{T}_{d_1}$ and $\mathbb{T}_{\sigma_n(d_1)}$, cf. \cite[256]{gunter2001}. In particular,
$
\sigma_n([d_3,d_4]) = [\sigma_n(d_3), \sigma_n(d_4)],
$
cf. \cite[Theorem 3.5]{salzmann1967b} or \cite[Theorem 2.4.2]{gunter2001}.  Since $\pi(p, d_3) \in (d_3,d_4)$, we may assume $\overline{\sigma}(\pi(p, d_3)) \in [e_3,e_4]$.  It follows that $q \in \mathcal{S}$.

We show that $q$ cannot lie on the boundary of $\mathcal{S}$. Suppose for a contradiction that   $q \in [e_2,e_4)$. For inductive purposes, we let $p_0 \coloneqq p$, $p'\coloneqq \pi(d_3,p_0)$, and $r_0 \coloneqq [p_0]_+ \cap d_2p'$.  
For $n \ge 1$, let  $r_{n} \coloneqq d_4p_{n-1} \cap d_2p'$, and $p_{n}\coloneqq \pi(r_{n}, p)$. It follows that $r_{n+1} \in (r_n,p')$ and $p_{n+1} \in (p_n,p')$.  By applying Lemma \ref{lK4} for the sequence of circles $C_n\coloneqq \alpha(d_4, p_n,r_{n+1})$ and the constant sequence $(d_1)$ on $\mathbb{T}$,  we have both $(r_n)$ and $(p_n)$ convergent to $p'$.
 
	Let $q' \coloneqq \overline{\sigma}(p')$. 
	Since $\overline{\sigma}(p_0) = q$ and $\overline{\sigma}(r_0) = [q]_+ \cap e_2q'=e_2$. The inductive construction yields  $\overline{\sigma}(p_n) =q$ and $\overline{\sigma}(r_{n})= e_2$.
	
	Let $d_6 \coloneqq d_2d_3 \cap d_4d_5$ and  $d_6' \coloneqq \pi(d_6,p)$. Then there exists an $n \in \mathbb{N}$ such that $p_n \in (d_6',p')$. On the other hand, $\overline{\sigma}(p_n) \in e_2e_4$ implies $\overline{\sigma}(p_n) \not \in [\overline{\sigma}(d_6'),q']$, which is a contradiction. Hence, $q$ is in the interior of the rectangle $\mathcal{S}$.

\item By applying the argument in part 1) for arbitrary  rectangles in $\mathcal{D}$, we obtain: a) $\overline{\sigma}$ maps nonparallel points to nonparallel points; b) if $(x_n)  \in \mathcal{D}$ converges in $\mathbb{T}_{d_1}$, then $(\overline{\sigma}(x_n))$ converges in $\mathbb{T}_{e_1}$. In particular, $\overline{\sigma}$ is injective and continuous. The continuity of $\overline{\sigma}^{-1}$ now follows from interchanging the roles of $\mathcal{D}$ and $\mathcal{E}$ in part 1). This proves the lemma. \qedhere

\end{enumerate}
\end{proof}
\end{lemma}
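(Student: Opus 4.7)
The plan is to use the map $\overline{\sigma}\colon \mathcal{D}\to \mathcal{E}$, $x\mapsto \lim_n \sigma_n(x)$, as the candidate homeomorphism. I would first check that it is well defined and surjective, then establish bicontinuity, and read off injectivity.

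Well-definedness is almost formal. Every $x\in\mathcal{D}$ is built from $d_1,d_2,d_3$ by a finite word $W$ in the five geometric operations of Subsection~2.2. Because each $\sigma_n$ is an automorphism, it commutes with these operations, so $\sigma_n(x)=W(\sigma_n(d_1),\sigma_n(d_2),\sigma_n(d_3))$; by continuity of the operations and the assumed convergence $\sigma_n(d_i)\to e_i$, the limit exists and coincides with the value of $W$ at $(e_1,e_2,e_3)\in\mathcal{E}$. Swapping the roles of the tripods $(d_1,d_2,d_3)$ and $(e_1,e_2,e_3)$ gives surjectivity by exactly the same argument.

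The substantive step is bicontinuity, from which injectivity will follow. In the derived $\mathbb{R}^2$-plane $\mathbb{T}_{d_1}$, each point of $\mathcal{D}$ lies in the interior of arbitrarily small ``parallel rectangles'' whose corners are themselves in $\mathcal{D}$, so it suffices to show that $\overline{\sigma}$ carries interior points of such a rectangle into the interior of the image rectangle in $\mathbb{T}_{e_1}$, and sends nonparallel pairs to nonparallel pairs. To rule out a collapse of some interior $p$ onto a side, say $q:=\overline{\sigma}(p)\in[e_2,e_4)$, I would build an inductive ladder of points $p_n\in\mathcal{D}$ by iterating a join with $d_3$ and $d_4$ followed by a parallel intersection, chosen so that $p_n\to p'$ for some $p'\in (d_2,d_4)$. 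Applying the K4 coherence condition (Lemma~\ref{lK4}) to the circles $\alpha(d_4,p_n,r_{n+1})$ with the constant sequence $(d_1)$ forces the convergence $p_n\to p'$, while the degenerate geometry in $\mathbb{T}_{e_1}$ pins $\overline{\sigma}(p_n)$ at $q$; this contradicts the preservation of collinearity and order under limits (Lemma~\ref{lcollinear}). Ruling out a parallel collapse, i.e.\ $\overline{\sigma}(p)\parallel \overline{\sigma}(p')$ for $p\not\parallel p'$, is a simpler one-shot application of Lemma~\ref{lK4} to the joining circle $\alpha(d_1,p,p')$.

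The main obstacle I foresee is precisely this no-collapse-to-boundary step. The difficulty is that the hypothesis gives us $(\sigma_n)$-convergence only on the three-point set $\{d_1,d_2,d_3\}$, so any neighbourhood information at $p$ must be manufactured from $\mathcal{D}$ via iterated geometric operations, and the contradiction must be extracted through a careful use of K4. Once interior-to-interior preservation is in place, continuity of $\overline{\sigma}$ follows from order preservation on $\mathbb{R}^2$-planes, continuity of $\overline{\sigma}^{-1}$ follows from the symmetric construction starting at $(e_1,e_2,e_3)$, and injectivity follows because nonparallel points go to nonparallel points and interior structure is preserved; this yields the desired homeomorphism.
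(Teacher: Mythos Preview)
Your plan matches the paper's proof essentially step for step: the same map $\overline{\sigma}$, the same reduction to interior-to-interior preservation for parallel rectangles, the same inductive ladder driven by K4 on the circles $\alpha(d_4,p_n,r_{n+1})$ against the constant sequence $(d_1)$, and the same symmetry argument for $\overline{\sigma}^{-1}$. One small slip to fix when you write it out: the limit point $p'$ of the ladder must lie on the side \emph{opposite} the collapsed edge (in the paper $p'=\pi(d_3,p)\in(d_5,d_3)$, not $(d_2,d_4)$), since it is precisely this opposition that produces the order contradiction with $\overline{\sigma}(p_n)\equiv q\in[e_2,e_4)$.
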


Let $N$ be the subset of $\widetilde{\mathcal{P}^3}$ defined as
\begin{align*}
N \coloneqq \{ (\sigma(d_1),\sigma(d_2),\sigma(d_3)) \mid \sigma \in \Sigma \}.
\end{align*}

Let $\omega: \Sigma \rightarrow N$ be  defined as $\omega: \sigma \mapsto (\sigma(d_1),\sigma(d_2),\sigma(d_3))$.
From Lemma \ref{idmap} and the definition of $N$, it follows that $\omega$ is a continuous bijection. To prove that $\omega$ is a homeomorphism between $\Sigma$ and $N$, we rely on the following.

\begin{lemma}  \label{SigmacongN}If $(\sigma_n) \in \Sigma$ is a sequence such that $(\omega(\sigma_n))$ converges in $N$, then $(\sigma_n)$ converges in $\Sigma$. In particular, $\omega$ is a closed map and $N$ is closed in $\widetilde{\mathcal{P}^3}$.  
	\begin{proof}  
	\begin{enumerate}[label=\arabic*),leftmargin=0pt,itemindent=*]
		\item We first claim that $(\sigma_n)$ converges pointwise.  Fix $x \in \mathcal{P}$. Since $\mathcal{P}$ is compact, $(\sigma_n(x))$ has an accumulation point $x^*$. Passing to subsequences, we assume $(\sigma_n(x))$ converges to $x^*$.  By changing derived planes if necessary, we can further  assume that  $x^*$ is a  point in $\mathbb{T}_{e_1}$.

		We further assume that  $x \in (d_2,d_5)$ in  $\mathbb{T}_{d_1}$, as other cases can be treated similarly. Let $(y_r) \in \mathcal{D} \cap [d_2,x)$ and $(z_r) \in \mathcal{D} \cap (x,d_5]$ be two sequences convergent to $x$. Let $\overline{\sigma}: \mathcal{D} \rightarrow \mathcal{E}: x \mapsto \lim_n \sigma_n(x)$ as in the proof of Lemma \ref{homomeomorphismdense}. Passing to subsequences we assume $(\overline{\sigma}(y_r))\rightarrow y^*$ and $(\overline{\sigma}(z_r)) \rightarrow z^*$.
		
		In $\mathbb{T}_{e_1}$, by Lemma \ref{lcollinear}, for each fixed $r$, we have $x^* \in (\overline{\sigma}(y_r),\overline{\sigma}(z_r))$. Letting $r \rightarrow \infty,$ we get $x^* \in [y^*,z^*]$. Suppose for a contradiction that $y^* \ne z^*$. Since $\mathcal{E}$ is dense and $\overline{\sigma}$ is a bijection between $\mathcal{D}$ and $\mathcal{E}$, there exist two distinct  points $u,v \in \mathcal{D}$ such that $\overline{\sigma}(u) \ne \overline{\sigma}(v) \in (y^*,z^*)$. This implies  $\overline{\sigma}(u) \in (\overline{\sigma}(y_r),\overline{\sigma}(z_r))$ and thus  $u \in (y_r,z_r)$ for all  sufficiently large $r$.  The only point in the point set $\mathcal{P}$ satisfying this condition  is $x$, and so $u= x$. The same argument implies that $v = x$, which then yields a contradiction.
		Therefore $x^* = y^* = z^*$.  This proves the claim.  
		
		\item We now extend the map $\overline{\sigma}$ to the point set $\mathcal{P}$ by letting $\overline{\sigma} : \mathcal{P} \rightarrow \mathcal{P} : x \mapsto \lim_n \sigma_n(x)$, which is a well-defined map by part 1).  We claim that $(\sigma_n)$ converges uniformly to $\overline{\sigma}$.  It is sufficient to show  convergence on the  rectangle $\mathcal{R}$ formed by the parallel classes of $d_2$ and $d_3$ in the derived plane $\mathbb{T}_{d_1}$.

		Fix $\epsilon >0$.  For every $\xi \in  \mathcal{R}$ let $\mathcal{R}_{\xi}$ be a rectangle whose vertices $\xi_1,\xi_2,\xi_3,\xi_4$ belong to $\mathcal{D}$ such that $\xi$ is inside $\mathcal{R}_{\xi}$ and $\max_i \mathbf{d}(\overline{\sigma}(\xi),\overline{\sigma}(\xi_i)) \le \epsilon /2$, for $i=1,\ldots,4$.   The union of the interiors of  all such rectangles is an open cover of $\mathcal{R}$ and thus has a finite subcover $\mathscr{F}$ with the set of finite vertices  $\mathcal{D}_{\epsilon/2} \subset \mathcal{D}$. Since $(\sigma_n)$ converges pointwise, there exists $n_0$ such that if $ n \ge n_0$ and $ \chi \in \mathcal{D}_{\epsilon/2}$, then $\mathbf{d}(\sigma_n(\chi),\sigma(\chi)) \le \epsilon/2$.
		
		\begin{figure}[h]
			\begin{center}
				\begin{tikzpicture}
				\draw [fill] (0,0) circle [radius=0.1];
				\node[below left] at (0,0) {$\sigma_n(x_1)$};
				\coordinate (gnx_1) at (0,0);
				
				\draw [fill] (6,0) circle [radius=0.1];
				\node[below right] at (6,0) {$\sigma_n(x_2)$};
				\coordinate (gnx_2) at (6,0);
				
				\draw [fill] (6,7) circle [radius=0.1];
				\node[above right] at (6,7) {$\sigma_n(x_3)$};
				\coordinate (gnx_3) at (6,7);
				
				\draw [fill] (0,7) circle [radius=0.1];
				\node[above left] at (0,7) {$\sigma_n(x_4)$};
				\coordinate (gnx_4) at (0,7);
				
				\draw [fill] (1,2.9) circle [radius=0.1];
				\node[below] at (1,2.9) {$\sigma_n(x)$};
				\coordinate (gnx) at (1,2.9);
				
				\draw  (gnx_1) -- (gnx_2);
				\draw  (gnx_2) -- (gnx_3);
				\draw  (gnx_3) -- (gnx_4);
				\draw  (gnx_4) -- (gnx_1);
				\draw [fill] (3,2) circle [radius=0.1];
				\node[below] at (3.1,2) {$\sigma(x_1)$};
				\coordinate (gx_1) at (3,2);
				
				\draw [fill] (5,2) circle [radius=0.1];
				\node[below ] at (5,2) {$\sigma(x_2)$};
				\coordinate (gx_2) at (5,2);
				
				\draw [fill] (5,6) circle [radius=0.1];
				\node[above] at (5,6) {$\sigma(x_3)$};
				\coordinate (gx_3) at (5,6);
				
				\draw [fill] (3,6) circle [radius=0.1];
				\node[above] at (3,6) {$\sigma(x_4)$};
				\coordinate (gx_4) at (3,6);
				
				\draw [fill] (4.5,5) circle [radius=0.1];
				\node[above ] at (4.5,5) {$\sigma(x)$};
				\coordinate (gx) at (4.5,5);
				
				\draw  (gx_1) -- (gx_2);
				\draw  (gx_2) -- (gx_3);
				\draw  (gx_3) -- (gx_4);
				\draw  (gx_4) -- (gx_1);
				
				\draw  [dashed] (gx) -- (gx_1)  node [sloped,below right,pos=0.7, fill=white] {\footnotesize{$\le \varepsilon/2$}};
				\draw [dashed] (gnx_1) -- (gx_1) node [ sloped,below right,pos=0.4, fill=white] {\footnotesize{$\le \varepsilon/2$}};
				\draw  [dashed] (gnx) -- (gx)  node [ sloped,above left,pos=0.5, fill=white] {\footnotesize{$\le \varepsilon$}};
				\end{tikzpicture}
			\end{center}
			\caption{} \label{fig:uniformconvergence}
		\end{figure}
		
			Let $x \in \mathcal{R}$ and let $\mathcal{R}_x$ be a rectangle from $\mathscr{F}$  that covers $x$. Then 
			$$\max_i \mathbf{d}(\overline{\sigma}(x),\overline{\sigma}(x_i)) \le \epsilon /2,$$ 
			see Figure \ref{fig:uniformconvergence}. For $n \ge n_0$, we have $\mathbf{d}(\sigma_n(x_i),\overline{\sigma}(x_i)) \le \epsilon/2$, which implies $\max_i \mathbf{d}(\overline{\sigma}(x),\sigma_n(x_i)) \le \epsilon$. Since $\sigma_n(x)$ is in the interior of  the rectangle formed by  $\sigma_n(x_i)$, for $i=1,\dots,4$, we have $\mathbf{d}(\sigma_n(x),\overline{\sigma}(x)) \le \epsilon$. Therefore  $(\sigma_n)$ converges uniformly to $\overline{\sigma}$ on $\mathcal{R}$.    
		 
		 3) As in the  proof of Lemma \ref{homomeomorphismdense}, it follows that $\overline{\sigma}$ is a bijection and thus a homeomorphism. The continuity of geometric operations ensures that $\overline{\sigma}$ is an automorphism of $\mathbb{T}$.  Hence $(\sigma_n)$ converges to $\overline{\sigma}$ in $\Sigma$.  
		  \qedhere
		 
	\end{enumerate}
	\end{proof}
\end{lemma}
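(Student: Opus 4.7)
The strategy is to promote the limit map $\overline{\sigma}$ from the dense set $\mathcal{D}$ provided by Lemma \ref{homomeomorphismdense} to an automorphism of the whole torus, in three stages: pointwise convergence of $(\sigma_n)$ everywhere on $\mathcal{P}$, uniform convergence on a compact rectangle, and the verification that the pointwise limit lies in $\Sigma$. The closedness of $\omega$ and of $N$ then follow immediately from the first assertion combined with Lemma \ref{idmap}.

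For pointwise convergence, fix $x \in \mathcal{P}$. By compactness of $\mathcal{P}$ every subsequence of $(\sigma_n(x))$ has a convergent further subsequence, so it suffices to show all accumulation points coincide. Pass to a subsequence so that $\sigma_n(x) \to x^*$, and, by switching to a different derived plane if necessary, assume that $x$ lies in the interior of the rectangle $\mathcal{R}$ of $\mathbb{T}_{d_1}$ bounded by the parallel classes of $d_2$ and $d_3$. Choose sequences $y_r \in \mathcal{D} \cap [d_2, x)$ and $z_r \in \mathcal{D} \cap (x, d_5]$ tending to $x$ along the horizontal axis of $\mathbb{T}_{d_1}$, and (passing to further subsequences) set $y^* \coloneqq \lim_r \overline{\sigma}(y_r)$ and $z^* \coloneqq \lim_r \overline{\sigma}(z_r)$. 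Each triple $(\overline{\sigma}(y_r), \sigma_n(x), \overline{\sigma}(z_r))$ is collinear in $\mathbb{T}_{e_1}$ with the middle point in the open segment, so Lemma \ref{lcollinear} forces $x^* \in [y^*, z^*]$. If $y^* \neq z^*$, density of $\mathcal{E}$ in $\mathbb{T}_{e_1}$ supplies two distinct points $\overline{\sigma}(u), \overline{\sigma}(v) \in (y^*, z^*)$; for all sufficiently large $r$ these lie in $(\overline{\sigma}(y_r), \overline{\sigma}(z_r))$, and applying $\overline{\sigma}^{-1}$ gives $u, v \in (y_r, z_r)$, forcing $u = v = x$, a contradiction. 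Hence $y^* = z^* = x^*$, and $x^*$ is independent of the chosen subsequence.

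Now extend $\overline{\sigma}$ to $\mathcal{P}$ by $\overline{\sigma}(x) \coloneqq \lim_n \sigma_n(x)$. For uniform convergence it suffices to handle the compact rectangle $\mathcal{R}$. Given $\epsilon > 0$, for each $\xi \in \mathcal{R}$ select a small rectangle $\mathcal{R}_\xi \subset \mathcal{R}$ with vertices $\xi_1, \ldots, \xi_4 \in \mathcal{D}$ containing $\xi$ in its interior and satisfying $\mathbf{d}(\overline{\sigma}(\xi), \overline{\sigma}(\xi_i)) \le \epsilon/2$. Extract a finite subcover; pointwise convergence on its finite set of vertices yields an $n_0$ with $\mathbf{d}(\sigma_n(\chi), \overline{\sigma}(\chi)) \le \epsilon/2$ for every such vertex $\chi$ and every $n \ge n_0$. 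For $x \in \mathcal{R}$ and $n \ge n_0$, the point $\sigma_n(x)$ sits inside the rectangle with vertices $\sigma_n(\xi_i)$, each within $\epsilon$ of $\overline{\sigma}(x)$ in the maximum metric, so $\mathbf{d}(\sigma_n(x), \overline{\sigma}(x)) \le \epsilon$. Bijectivity of the extended $\overline{\sigma}$ follows by reapplying Lemma \ref{homomeomorphismdense} with $\mathcal{D}$ and $\mathcal{E}$ swapped, and continuity of the geometric operations forces $\overline{\sigma}$ to map circles to circles and preserve oriented parallel classes, so $\overline{\sigma} \in \Sigma$ and $\sigma_n \to \overline{\sigma}$ in $\Sigma$. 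Combined with the continuous bijectivity of $\omega$ supplied by Lemma \ref{idmap}, this shows $\omega$ is a closed map, a homeomorphism onto $N$, and that $N$ is closed in $\widetilde{\mathcal{P}^3}$.

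The main obstacle is the uniqueness of the accumulation point $x^*$: convergence of $(\sigma_n)$ on the dense set $\mathcal{D}$ does not transfer automatically to $x \notin \mathcal{D}$, since $\overline{\sigma}$ might in principle fail to extend continuously across the complement. Overcoming this requires exploiting the order structure of the $\mathbb{R}^2$-plane $\mathbb{T}_{e_1}$ through Lemma \ref{lcollinear}, together with density of $\mathcal{E}$, in order to trap any accumulation point into a single location. Once this squeeze is in place, the uniform convergence step and the verification that $\overline{\sigma}$ is an automorphism are essentially routine, relying on the rectangular structure furnished by the parallel classes and on continuity of the geometric operations.
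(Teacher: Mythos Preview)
Your proposal is correct and follows essentially the same three-stage approach as the paper's proof: pointwise convergence via the squeeze argument using Lemma \ref{lcollinear} and density of $\mathcal{E}$, uniform convergence via a finite cover by rectangles with vertices in $\mathcal{D}$, and verification that $\overline{\sigma}\in\Sigma$. One small imprecision worth fixing: the collinear triples to which Lemma \ref{lcollinear} is applied are $(\sigma_n(y_r),\sigma_n(x),\sigma_n(z_r))$ in $\mathbb{T}_{\sigma_n(d_1)}$, not $(\overline{\sigma}(y_r),\sigma_n(x),\overline{\sigma}(z_r))$; taking the limit in $n$ then gives $x^*\in(\overline{\sigma}(y_r),\overline{\sigma}(z_r))$, after which you let $r\to\infty$.
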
 

	\begin{proof}[Proof of Theorem \ref{tcplie}] 
		We have shown in Lemma \ref{SigmacongN} that $\Sigma \cong N$. Since $N$ is  a closed subspace of $\widetilde{\mathcal{P}^3}$,  $\Sigma$ is locally compact with dimension at most 6. By Szenthe's Theorem (cf. \cite[Theorems A2.3.4 and A2.3.5]{gunter2001}), $\Sigma$ is a Lie group. Since $\Sigma$ is a subgroup of index at most 8 of the topological group $\text{\normalfont Aut}(\mathbb{T})$, it follows that $\text{\normalfont Aut}(\mathbb{T})$ is a Lie group of the same dimension as $\Sigma$.  \qedhere
		
	\end{proof}

%

	\begin{proof}[Proof of Theorem \ref{dimension4}] Let $\Gamma$ be the connected component of $\text{\normalfont Aut} (\mathbb{T})$ and let $\Delta^\pm$ be the kernel of the action of $\Gamma$ on $\mathcal{G}^\pm$.  
		
%
%
		
		1) Assume  $\dim T^+ = 3$. From the proof of \cite[Theorem 4.4.10]{gunter2001}, which does not use the Axiom of Touching,  $\mathbb{T}$ is isomorphic to a plane $\mathcal{M}(f,id)$, where $f$ is an orientation-preserving homeomorphism of $\mathbb{S}^1$. Hence $\mathbb{T}$ is a flat Minkowski plane. 
		
		2) If $\dim \text{\normalfont Aut}(\mathbb{T}) \ge 5$, then the same arguments as in the proof of \cite[Theorem 4.4.12]{gunter2001} show that $\mathbb{T}$ is isomorphic to the classical Minkowski plane.
		
		 Assume $\dim \text{\normalfont Aut}(\mathbb{T}) = 4$. 
		If at least one of  $\Gamma/\Delta^\pm$ is transitive on $\mathcal{G}^\pm$, then  $\mathbb{T}$ is isomorphic to a plane $\mathcal{M}(f,id)$, where $f$ is a semi-multiplicative homeomorphism of the form $f_{d,s}$, $(d,s) \ne (1,1)$ (cf. \cite[Theorem 4.4.15]{gunter2001}). Otherwise, $\Gamma$ fixes a point $p$ and acts transitively on $\mathcal{P} \backslash ([p]_+ \cup [p]_-)$. Then $\Gamma$ induces  a 4-dimensional point-transitive group of automorphisms $\overline{\Gamma}$ on the derived plane $\mathbb{T}_p$. By \cite[Theorem 4.12]{salzmann1967b}, $\mathbb{T}_p$ is isomorphic to either the Desarguesian plane or a half-plane. 
		
		We show $\mathbb{T}_p$ is not isomorphic to  a half-plane.
		Suppose the contrary. Since $\overline{\Gamma}$ acts transitively on the two sets of lines derived from $\mathcal{G}^\pm$, from the proof of \cite[Theorem 4.12]{salzmann1967b},    $\dim \Delta^\pm =3$. But then $\dim  \Delta^+ \Delta^-=6 >\dim \Gamma,$   a contradiction. 
		
		Hence $\mathbb{T}_p$ is a Desarguesian plane. It follows from \cite[Theorem 4.4.15]{gunter2001}  that $\mathbb{T}$ is a nonclassical generalised Hartmann plane. \qedhere
		
	\end{proof}

\printbibliography


\end{document}